\documentclass[a4paper,11pt]{amsart}

\usepackage{amsmath,amssymb,amsthm}
\usepackage{cite}
\allowdisplaybreaks[0]
\numberwithin{equation}{section}
\newtheorem{theorem}{Theorem}[section]

\newcommand{\R}{\mathbb R}
\newcommand{\C}{\mathbb C}
\newcommand{\Mtilde}{\widetilde M}
\DeclareMathOperator{\grad}{grad}
\DeclareMathOperator{\diver}{div}

\title[Hamiltonian stationary Lagrangian surfaces]
{Hamiltonian stationary Lagrangian surfaces in complex space forms
with constant-length gradient of the mean curvature}

\author[]{Toru Sasahara}

\address{Division of Mathematics, Center for Liberal Arts and Sciences,
Hachinohe Institute of Technology, Hachinohe, Aomori 031-8501, Japan}

\email{sasahara@hi-tech.ac.jp}

\date{}

\begin{document}

\begin{abstract}
We study Hamiltonian stationary Lagrangian surfaces in complex
space forms with nowhere-zero mean curvature vector $H$. We prove
that if the Gaussian curvature is constant and $\grad|H|$ has
positive constant length, then both the surface and the ambient
space are flat. Moreover, the immersion is locally congruent to
the known surface obtained as the product of a Cornu spiral and its
reflection.
\end{abstract}

\keywords{Hamiltonian stationary Lagrangian surfaces, complex space forms,
constant Gaussian curvature, Cornu spirals}

\subjclass[2020]{Primary 53C42; Secondary 53B25}
\def\subjclassname{\textup{2020} Mathematics Subject Classification}

\makeatletter
\let\originalsettitle\@settitle
\def\@settitle{\setlength{\topskip}{12pt}\originalsettitle}
\makeatother
\maketitle
\vspace{-10pt}

\section{Introduction}

Let $\Mtilde^n$ be a K\"ahler manifold of complex dimension $n$
with complex structure $J$ and K\"ahler metric $\langle\,,\,\rangle$.
An $n$-dimensional submanifold $M$ of $\Mtilde^n$ is Lagrangian
if $\langle X,JY\rangle=0$ for all tangent vector
fields $X,Y$ on $M$. A normal vector field $\xi$ of a Lagrangian
submanifold $M$ is a Hamiltonian variation vector field if
$\xi=J\grad f$ for some compactly supported smooth function $f$
on $M$, where $\grad$ denotes the gradient with respect to the
induced metric. A Lagrangian submanifold is Hamiltonian stationary
if the first variation of volume vanishes for every Hamiltonian
variation. Oh~\cite{Oh1993} showed that this condition is equivalent to
\begin{equation}\label{eq:hsl}
 \diver(JH)=0,
\end{equation}
where $H$ denotes the mean curvature vector field of $M$.

Let $\Mtilde^n(4\epsilon)$ be a complete and simply connected complex
space form of complex dimension $n$ and constant holomorphic sectional
curvature $4\epsilon$.
%
%
It was proved in~\cite{SasaharaHarmonic} that a Hamiltonian stationary
Lagrangian surface in a complex space form with nonzero constant
mean curvature has a parallel second fundamental form.
The same paper also classifies Hamiltonian stationary Lagrangian
surfaces in complex space forms with nonconstant harmonic mean
curvature and constant Gaussian curvature $K$.
For these surfaces, $K=\epsilon<0$.

In this paper, we assume that the length of $\grad|H|$ is a positive constant
and that the Gaussian curvature is constant.
In contrast to the harmonic case, we obtain $K=\epsilon=0$.
Our main result is the following theorem.

\begin{theorem}\label{thm:classification}
Let $M$ be a Hamiltonian stationary Lagrangian surface in
$\Mtilde^2(4\epsilon)$ with nowhere-zero mean curvature vector $H$.
Suppose that the Gaussian curvature $K$ is constant and that
the length of $\grad|H|$ is a positive constant.
Then $K=\epsilon=0$. Moreover, the immersion is locally congruent
to an open subset of the surface parametrized by
\begin{equation}\label{eq:immersion}
 F_\kappa(x,y)=
 \left(\int_0^x e^{i\kappa t^2}\,dt,
       \int_0^y e^{-i\kappa t^2}\,dt\right)\in\C^2,
\end{equation}
where $(x,y)\in\R^2\setminus\{(0,0)\}$ and
$\kappa=|\grad|H||$.
Conversely, for every $\kappa>0$, the immersion $F_\kappa$ satisfies
all the assumptions of the theorem.
\end{theorem}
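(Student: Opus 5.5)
We will work in a local orthonormal frame $\{e_1,e_2\}$ adapted to $H$ and apply the Gauss and Codazzi equations of a Lagrangian surface in $\Mtilde^2(4\epsilon)$. Since $H$ is nowhere zero, we put $e_1$ so that $H=\lambda Je_1$, where $\lambda=|H|/1$ up to the factor $1/2$ normalization, i.e.\ $2\lambda=|H|$ in the convention $H=\frac12\operatorname{tr}h$. Lagrangian surfaces have totally symmetric cubic form $\langle h(X,Y),JZ\rangle$. We therefore write
\[
h(e_1,e_1)=aJe_1+bJe_2,\quad h(e_1,e_2)=bJe_1+cJe_2,\quad h(e_2,e_2)=cJe_1+dJe_2.
\]
The choice of $e_1$ forces $b+d=0$ and $a+c=2\lambda$. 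Hamiltonian stationarity \eqref{eq:hsl} says $\diver(\lambda e_1)=0$, i.e.\ $e_1\lambda+\lambda\,\diver e_1=0$. We introduce the connection forms $\omega(e_1)=\langle\nabla_{e_1}e_1,e_2\rangle$ and $\omega(e_2)=\langle\nabla_{e_2}e_1,e_2\rangle$, so that $\diver e_1=\omega(e_2)$. The Codazzi equations (the ambient term vanishes for Lagrangian surfaces in complex space forms) then give first-order equations for $a,b,c,\lambda$ and $\omega$. The Gauss equation gives $K=\epsilon+ac-b^2+ c d\cdot$\ldots, concretely $K=\epsilon+ac+bd-b^2-c^2$.

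The next step uses the hypothesis $|\grad\lambda|=$ const $>0$. A well-known consequence of the Codazzi equations for Hamiltonian stationary Lagrangian surfaces is that the mean curvature form $\alpha_H=\langle JH,\cdot\rangle$ is closed and co-closed. Hence $\log\lambda$, or more precisely the Maslov-type form, satisfies a harmonic-type equation. I would combine this with $\diver(\lambda e_1)=0$ and closedness $d(\lambda\,\theta^1)=0$. These give $e_2\lambda=-\lambda\omega(e_1)$ and $e_1\lambda=-\lambda\omega(e_2)$, so $|\grad\lambda|^2=\lambda^2(\omega(e_1)^2+\omega(e_2)^2)$. Since $K=e_2\omega(e_1)-e_1\omega(e_2)-\omega(e_1)^2-\omega(e_2)^2$ and $K$ is constant, we get an overdetermined system for $\lambda$. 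Its integrability conditions come from differentiating $|\grad\lambda|^2=\kappa^2$ twice (Bochner formula) and using $\Delta\log\lambda=0$. The Bochner formula gives $|\nabla^2\lambda|^2+K\kappa^2=0$ together with $\Delta\lambda=\kappa^2/\lambda$.

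Differentiating further, and using that the eigenvalues of the Hessian are $0$ and $\Delta\lambda$ (because $\nabla^2\lambda(\grad\lambda,\cdot)=0$), we get $|\nabla^2\lambda|^2=\kappa^4/\lambda^2$. Then $K\kappa^2=-\kappa^4/\lambda^2$, so $\lambda$ would be constant unless a contradiction arises. This forces a refinement: the harmonic function is really $\log\lambda$ only under the right normalization. I expect the main obstacle to be organizing this computation so that the constancy of $K$ and $\kappa$ forces $K=0$, and then $\epsilon=0$ through the Gauss equation with the remaining Codazzi constraints on $a,b,c$. My plan is to show the level curves of $\lambda$ are geodesics (they are $\grad$-orthogonal with $|\grad\lambda|$ constant, hence the integral curves of $\grad\lambda$ are unit-speed geodesics, so $\lambda$ is a distance function). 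Then $\Delta\lambda$ equals the geodesic curvature of the level sets, which is constant in the $K$-constant model. Matching this with $\operatorname{div}(\lambda e_1)=0$ should yield $K=0$ and that $\lambda$ is an affine function of flat coordinates. From there, Codazzi pins down $a,b,c$ as functions of one variable, and Gauss forces $\epsilon=0$.

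Once $K=\epsilon=0$, we will take flat coordinates $(x,y)$. We expect the second fundamental form to split so that the immersion is a product of two planar curves. Each curve then has curvature linear in arclength with slopes $\pm2\kappa$, i.e.\ Cornu spirals, which integrates to \eqref{eq:immersion} up to congruence. For the converse, we will directly compute for $F_\kappa$ the induced metric $dx^2+dy^2$, the mean curvature $H=\kappa(xJ\partial_x-yJ\partial_y)$ (up to constant), check that $\diver(JH)=0$ and $|\grad|H||=\kappa$, and note that $H\neq0$ off the origin.
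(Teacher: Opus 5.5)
There is a genuine gap: your argument for $K=0$ does not work, and the steps after it are stated as expectations rather than derived.

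\textbf{The proof of $K=0$.} You use $\Delta\log\lambda=0$, but this is not available. Let $u$ be a local potential with $JH=\grad u$; it exists because $g(JH,\cdot)$ is closed and co-closed. Then $u$ is harmonic with $|\grad u|=|H|$. For a harmonic function on a surface, $\Delta\log|\grad u|=-K$ (with $\Delta=-\diver\grad$). So assuming $\Delta\log\lambda=0$ is equivalent to assuming $K=0$, which is what you want to prove.

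Your Bochner identity also drops the term $\langle\grad\lambda,\grad\Delta\lambda\rangle$. These two slips produce the spurious relation $K\kappa^2=-\kappa^4/\lambda^2$, which would exclude the model $F_\kappa$ itself. With the correct inputs the argument closes at once. Put $a=|H|$. Then $\Delta a=-Ka-\kappa^2/a$. The Hessian of $a$ has eigenvalues $0$ and $-\Delta a$, since $\grad a$ lies in its kernel. Also $\grad\Delta a=(-K+\kappa^2/a^2)\grad a$. Bochner then gives $0=-K(Ka^2+4\kappa^2)$, so $K=0$ because $a$ is nonconstant.

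Your fallback plan rests on claims that already fail for $F_\kappa$, where $|H|=\kappa\sqrt{x^2+y^2}$. The level curves of $|H|$ are circles, not geodesics; only the gradient lines are geodesics. Their geodesic curvature $\kappa/|H|$ is not constant. And $|H|$ is not an affine function of flat coordinates.

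\textbf{The steps after $K=0$.} The claims that Codazzi pins down the coefficients, that Gauss forces $\epsilon=0$, and that $h$ splits are not backed by any mechanism. What is needed is the following.

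\begin{enumerate}
\item In flat coordinates write $g(JH,\cdot)=P\,dx+Q\,dy$. Closedness and co-closedness say that $f=P-iQ$ is holomorphic, and one checks $|\grad|f||=|f'|$.
\item Hence $|f'|=\kappa$ forces $f$ to be affine. After a translation and rotation, $JH=-\kappa x\partial_x+\kappa y\partial_y$. It is $JH$, not $|H|$, that becomes linear.
\item Write $h(\partial_x,\partial_x)=AJ\partial_x+BJ\partial_y$, $h(\partial_x,\partial_y)=BJ\partial_x+CJ\partial_y$, $h(\partial_y,\partial_y)=CJ\partial_x+DJ\partial_y$ in the parallel frame. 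The trace condition together with Codazzi makes $C+iB$ holomorphic.
\item The Gauss equation becomes $\epsilon=2(B^2+C^2)-2\kappa(xC-yB)$. Since $xC-yB$ is harmonic and $\Delta(B^2+C^2)=-4(B_x^2+C_x^2)$, applying $\Delta$ gives $B_x=C_x=0$, so $B$ and $C$ are constant.
\item Differentiating the Gauss relation once more gives $B=C=0$. Hence $\epsilon=0$, $h(\partial_x,\partial_x)=2\kappa xJ\partial_x$, $h(\partial_x,\partial_y)=0$ and $h(\partial_y,\partial_y)=-2\kappa yJ\partial_y$.
\end{enumerate}

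Only after this does your product-of-Cornu-spirals integration go through. Your verification of the converse is fine.
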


The immersion \eqref{eq:immersion} is the product of a Cornu spiral
and its reflection, as described in
\cite[Section~3, Corollary~1]{AnciauxCastro2011}.

Section~\ref{sec:structure} contains the formulas needed for the proof
of Theorem~\ref{thm:classification}, which is given in
Section~\ref{sec:proof}. In Section~\ref{sec:chen-comparison}, we establish
an explicit congruence between $F_\kappa$ and the $c=0$ extension
of the second family of Chen, Garay, and Zhou~\cite{CGZ2009}.

\section{Preliminaries}
\label{sec:structure}

\subsection{Fundamental equations}

Let $M$ be a Lagrangian submanifold of $\Mtilde^n(4\epsilon)$.
Let $\nabla$ and $\widetilde\nabla$ denote the Levi--Civita
connections on $M$ and $\Mtilde^n(4\epsilon)$, respectively.
The Gauss and Weingarten formulas are
\begin{equation}\label{eq:gauss-weingarten}
 \widetilde\nabla_XY=\nabla_XY+h(X,Y),\qquad
 \widetilde\nabla_X\xi=-S_\xi X+\nabla^\perp_X\xi
\end{equation}
for tangent vector fields $X,Y$ and a normal vector field $\xi$.
Here $h$ is the second fundamental form, $S_\xi$ is the shape
operator in the direction $\xi$, and $\nabla^\perp$ is the normal
connection.
The mean curvature vector field is given by
$H=(1/n)\operatorname{tr}_g h$, where $g$ is the induced metric.
In this paper, we call the function $|H|$ the mean curvature.
All gradients and divergences are taken with respect to $g$.
We use the sign convention $\Delta=-\diver\grad$ for the Laplacian.

The Lagrangian condition implies
\begin{equation}\label{eq:lagrangian-connection}
 \nabla^\perp_X(JY)=J(\nabla_XY).
\end{equation}
Moreover, the cubic form
\begin{equation}\label{eq:lagrangian-cubic}
 \mathcal C(X,Y,Z)=\langle h(X,Y),JZ\rangle
\end{equation}
is symmetric in all three arguments
(see \cite[Section~2]{DillenLiVranckenWang2012}).

We define the curvature tensor by
\begin{equation}\label{eq:curvature-conventions}
 R(X,Y)Z=\nabla_X\nabla_YZ-\nabla_Y\nabla_XZ-\nabla_{[X,Y]}Z.
\end{equation}
The Gauss and Codazzi equations are
\begin{equation}\label{eq:gauss-equation}
\begin{aligned}
 \langle R(X,Y)Z,W\rangle
 &=\langle h(Y,Z),h(X,W)\rangle-\langle h(X,Z),h(Y,W)\rangle\\
 &\quad+\epsilon\bigl(\langle Y,Z\rangle\langle X,W\rangle
                    -\langle X,Z\rangle\langle Y,W\rangle\bigr),
\end{aligned}
\end{equation}
\begin{equation}\label{eq:codazzi-equation}
 (\bar\nabla_Xh)(Y,Z)=(\bar\nabla_Yh)(X,Z),
\end{equation}
where $X,Y,Z,W$ are tangent vector fields and $\bar\nabla h$ is
defined by
\begin{equation}\label{eq:covariant-h}
 (\bar\nabla_Xh)(Y,Z)
 =\nabla^\perp_Xh(Y,Z)-h(\nabla_XY,Z)-h(Y,\nabla_XZ).
\end{equation}
From now on, let $n=2$. For an orthonormal tangent frame
$e_1,e_2$, the Gaussian curvature is given by
$K=\langle R(e_1,e_2)e_2,e_1\rangle$.

\subsection{The mean curvature one-form}

Assume that $M$ is Hamiltonian stationary and that $H$ is nowhere
zero. Define
\begin{equation}\label{eq:mean-trace}
 a=|H|>0,\qquad T=JH,\qquad
 \alpha=T^\flat=g(T,\cdot).
\end{equation}
By definition,
$\alpha(Y)=-\langle H,JY\rangle
=-\tfrac12\sum_{i=1}^2\mathcal C(e_i,e_i,Y)$.
By \eqref{eq:lagrangian-connection}, the Codazzi equation gives
\begin{equation*}
 (\nabla_X\mathcal C)(Y,Z,W)
 =(\nabla_Y\mathcal C)(X,Z,W).
\end{equation*}
Since $\mathcal C$ is symmetric, it follows that
$\nabla\mathcal C$ is symmetric in all four arguments.
Taking a trace, we see that $\nabla\alpha$ is symmetric.
Hence $\alpha$ is closed. By Hamiltonian stationarity, we have
\begin{equation}\label{eq:mean-form-harmonic}
 d\alpha=0,\qquad \diver T=0.
\end{equation}

\section{Proof of the main theorem}
\label{sec:proof}

\begin{proof}
By assumption and \eqref{eq:mean-trace},
$|\grad a|=\kappa>0$, where $\kappa$ is constant.

\smallskip\noindent
\textbf{Step 1. $K=0$.}
By \eqref{eq:mean-form-harmonic}, there is a locally defined smooth
function $u$ satisfying
\begin{equation}\label{eq:harmonic-potential}
 T=\grad u,\qquad \Delta u=0,\qquad |\grad u|=a>0.
\end{equation}
With our sign convention, a harmonic function $v$ on a Riemannian
surface satisfies $\Delta\log|\grad v|=-K$ wherever $\grad v\ne0$
(see \cite[Lemma~2.1]{AdamowiczVeronelli2022}).
Applying this formula to $u$, we obtain
\begin{equation}\label{eq:intrinsic-log-a}
 \Delta\log a=-K.
\end{equation}
For $\Delta=-\diver\grad$, the chain rule takes the form
\begin{equation*}
 \Delta\log a=\frac{\Delta a}{a}+\frac{|\grad a|^2}{a^2}.
\end{equation*}
Combining this with \eqref{eq:intrinsic-log-a} and
$|\grad a|=\kappa$, we obtain
\begin{equation}\label{eq:laplacian-a}
 \Delta a=-Ka-\frac{\kappa^2}{a}.
\end{equation}

The Hessian of a smooth real-valued function $v$ is the symmetric
covariant $2$-tensor defined by
\begin{equation*}
 (\nabla^2v)(X,Y)=\langle\nabla_X\grad v,Y\rangle
 =X(Yv)-(\nabla_XY)v.
\end{equation*}
Since $|\grad a|^2=\kappa^2$, we have
\begin{equation*}
 0=X(|\grad a|^2)=2(\nabla^2a)(X,\grad a)
\end{equation*}
for every tangent vector field $X$. Thus the nonzero vector field
$\grad a$ lies in the kernel of $\nabla^2a$.
Since $M$ is two-dimensional and
$\operatorname{tr}_g\nabla^2a=-\Delta a$, the eigenvalues of
$\nabla^2a$ with respect to $g$ are $0$ and $-\Delta a$.
The squared norm is therefore
\begin{equation}\label{eq:hessian-gradient-frame}
 |\nabla^2a|^2=(\Delta a)^2.
\end{equation}
Since $K$ and $\kappa$ are constant, differentiating 
\eqref{eq:laplacian-a} shows
\begin{equation}\label{eq:gradient-laplacian-a}
 \grad\Delta a=\left(-K+\frac{\kappa^2}{a^2}\right)\grad a.
\end{equation}
For a smooth real-valued function $v$ on a Riemannian surface, the Bochner
formula with our sign convention is
\begin{equation}\label{eq:bochner-formula}
 \tfrac12\Delta|\grad v|^2
 =-|\nabla^2v|^2+\langle\grad v,\grad\Delta v\rangle-K|\grad v|^2.
\end{equation}
Applying \eqref{eq:bochner-formula} to $a$ and using
\eqref{eq:laplacian-a}--\eqref{eq:gradient-laplacian-a}, we obtain
\begin{equation}\label{eq:bochner-compatibility}
\begin{aligned}
 0&=-\left(Ka+\frac{\kappa^2}{a}\right)^2
     +\left(-K+\frac{\kappa^2}{a^2}\right)\kappa^2-K\kappa^2\\
  &=-K(Ka^2+4\kappa^2).
\end{aligned}
\end{equation}
If $K\ne0$, equation~\eqref{eq:bochner-compatibility} gives
$a^2=-4\kappa^2/K$, which is constant.
This contradicts $\grad(a^2)=2a\grad a\ne0$.
Hence $K=0$.

\smallskip\noindent
\textbf{Step 2. A local normal form for $JH$.}
Since $K=0$, choose local coordinates $z=x+iy$ with
$g=dx^2+dy^2$. Write $T=P\partial_x+Q\partial_y$.
In terms of $\alpha=P\,dx+Q\,dy$, equations
\eqref{eq:mean-form-harmonic} read
\begin{equation}\label{eq:trace-cr}
 P_y=Q_x,\qquad P_x=-Q_y.
\end{equation}
Thus $f=P-iQ$ is holomorphic and $a=|f|>0$.
Differentiating $a^2=P^2+Q^2$ and using \eqref{eq:trace-cr}, we obtain
\begin{equation*}
 a_x=\frac{PP_x+QQ_x}{a},\qquad
 a_y=\frac{PQ_x-QP_x}{a}.
\end{equation*}
Consequently,
\begin{equation*}
 a_x^2+a_y^2
 =\frac{(P^2+Q^2)(P_x^2+Q_x^2)}{a^2}
 =P_x^2+Q_x^2.
\end{equation*}
Using $g=dx^2+dy^2$ and $f'=P_x-iQ_x$, we have
\begin{equation}\label{eq:holomorphic-gradient}
 \kappa^2=|\grad a|^2=|f'|^2.
\end{equation}
The holomorphic function $f'$ has constant modulus and is therefore
constant. It follows that
\begin{equation}\label{eq:affine-trace}
 f(z)=\gamma z+\delta,\qquad |\gamma|=\kappa>0,
\end{equation}
where $\gamma,\delta\in\C$ are constants.

Since $\alpha=\operatorname{Re}(f\,dz)$, the coordinate change
$z=z_0+e^{i\beta}\zeta$ transforms \eqref{eq:affine-trace} into
\begin{equation}\label{eq:trace-coordinate-transform}
\begin{aligned}
 \alpha&=\operatorname{Re}\bigl(\widetilde f(\zeta)\,d\zeta\bigr),\\
 \widetilde f(\zeta)&=e^{i\beta}f(z_0+e^{i\beta}\zeta)
 =\gamma e^{2i\beta}\zeta+e^{i\beta}(\gamma z_0+\delta).
\end{aligned}
\end{equation}
Write $\gamma=\kappa e^{i\vartheta}$ and choose
$z_0=-\delta/\gamma$ and $\beta=(\pi-\vartheta)/2$.
Equation~\eqref{eq:trace-coordinate-transform} then gives
$\widetilde f(\zeta)=-\kappa\zeta$. Writing $\zeta=x+iy$, we have
$\alpha=\operatorname{Re}(-\kappa\zeta\,d\zeta)
=-\kappa x\,dx+\kappa y\,dy$.
Since $\alpha=g(T,\cdot)$ and $a=|T|$, we obtain
\begin{equation}\label{eq:normalized-trace}
 T=-\kappa x\partial_x+\kappa y\partial_y,\qquad
 a=\kappa\sqrt{x^2+y^2}>0.
\end{equation}

\smallskip\noindent
\textbf{Step 3. $\epsilon=0$ and determination of $h$.}
By the symmetry of the cubic form \eqref{eq:lagrangian-cubic},
the second fundamental form can be written as
\begin{equation}\label{eq:flat-frame-form}
\begin{aligned}
 h(\partial_x,\partial_x)&=A J\partial_x+B J\partial_y,\\
 h(\partial_x,\partial_y)&=B J\partial_x+C J\partial_y,\\
 h(\partial_y,\partial_y)&=C J\partial_x+D J\partial_y.
\end{aligned}
\end{equation}
Here $A,B,C,D$ are smooth real-valued functions. Taking the trace and
using $H=-JT$ and \eqref{eq:normalized-trace}, we obtain
\begin{equation}\label{eq:trace}
 A+C=2\kappa x,\qquad B+D=-2\kappa y.
\end{equation}
The frame $\partial_x,\partial_y$ is parallel.
By \eqref{eq:lagrangian-connection}, the normal frame
$J\partial_x,J\partial_y$ is also parallel with respect to the normal
connection. Thus, by \eqref{eq:covariant-h}, the Codazzi
equation~\eqref{eq:codazzi-equation} reduces to
\begin{equation}\label{eq:parallel-codazzi}
 A_y=B_x,\qquad B_y=C_x,\qquad C_y=D_x.
\end{equation}
Substituting $A=2\kappa x-C$ and $D=-2\kappa y-B$ from
\eqref{eq:trace} into \eqref{eq:parallel-codazzi} gives
\begin{equation}\label{eq:codazzi-cr}
 C_x=B_y,\qquad C_y=-B_x.
\end{equation}
Thus $\Delta B=\Delta C=0$.

Substituting \eqref{eq:flat-frame-form} into
\eqref{eq:gauss-equation} gives $0=\epsilon+A C+B D-B^2-C^2$.
Eliminating $A$ and $D$ by \eqref{eq:trace}, we obtain
\begin{equation}\label{eq:reduced-gauss}
 \epsilon=2(B^2+C^2)-2\kappa(x C-y B).
\end{equation}
The harmonicity of $B$ and $C$, together with \eqref{eq:codazzi-cr},
implies
\begin{equation}\label{eq:codazzi-laplacians}
\begin{aligned}
 \Delta(B^2+C^2)
 &=-4(B_x^2+C_x^2),\\
 \Delta(x C-y B)&=0.
\end{aligned}
\end{equation}
Since $\epsilon$ is constant, applying $\Delta$ to
\eqref{eq:reduced-gauss} gives
\begin{equation}\label{eq:ambient-flatness}
 0=\Delta\epsilon=-8(B_x^2+C_x^2).
\end{equation}
Equations \eqref{eq:ambient-flatness} and \eqref{eq:codazzi-cr}
imply that $B,C$ are constant. Differentiating
\eqref{eq:reduced-gauss} with respect to $x$ and $y$ then gives
$C=0$ and $B=0$, respectively. Hence $\epsilon=0$.
Substituting into \eqref{eq:trace}, we find $A=2\kappa x$ and
$D=-2\kappa y$. Thus we have 
\begin{equation}\label{eq:fundamental-forms}
\begin{aligned}
 h(\partial_x,\partial_x)&=2\kappa xJ\partial_x,\\
 h(\partial_x,\partial_y)&=0,\\
 h(\partial_y,\partial_y)&=-2\kappa yJ\partial_y.
\end{aligned}
\end{equation}

\smallskip\noindent
\textbf{Step 4. Integration and converse.}
By Step~3, the ambient space is $\C^2$. Let $F$ denote the given
immersion in the coordinates $(x,y)$.
The Gauss formula and \eqref{eq:fundamental-forms} give
\begin{equation}\label{eq:immersion-system}
 F_{xx}=2i\kappa xF_x,\qquad F_{xy}=0,\qquad
 F_{yy}=-2i\kappa yF_y.
\end{equation}
Integrating \eqref{eq:immersion-system}, we obtain
\begin{equation}\label{eq:integrated-tangent-fields}
 F_x=e^{i\kappa x^2}V_1,\qquad
 F_y=e^{-i\kappa y^2}V_2,
\end{equation}
where $V_1,V_2\in\C^2$ are constant vectors.
A second integration yields
\begin{equation}\label{eq:integrated-immersion}
 F(x,y)=V_0+V_1\int_0^x e^{i\kappa t^2}\,dt
              +V_2\int_0^y e^{-i\kappa t^2}\,dt,
 \qquad V_0\in\C^2.
\end{equation}
The induced metric and the Lagrangian condition imply that
$V_1,V_2$ are orthonormal with respect to the Hermitian inner
product. Thus the matrix $W=(V_1\ V_2)$ is unitary.
The holomorphic Euclidean isometry $Z\mapsto W^*(Z-V_0)$ transforms
\eqref{eq:integrated-immersion} into \eqref{eq:immersion}.
Here $W^*$ denotes the conjugate transpose of $W$.
The origin is excluded by \eqref{eq:normalized-trace}.

For the converse, fix $\kappa>0$ and let $F=F_\kappa$ be the map
in \eqref{eq:immersion}. Differentiation gives
\begin{equation}\label{eq:model-derivatives}
 F_x=(e^{i\kappa x^2},0),\qquad
 F_y=(0,e^{-i\kappa y^2}).
\end{equation}
These tangent vectors are orthonormal with respect to the Hermitian
inner product. Hence $F_\kappa$
is a Lagrangian immersion with $g=dx^2+dy^2$ and $K=0$.
Differentiating \eqref{eq:model-derivatives} again gives
\eqref{eq:immersion-system}, and hence \eqref{eq:fundamental-forms}.
The mean curvature vector and its length are given by
\begin{equation}\label{eq:model-mean-curvature}
\begin{aligned}
 H&=\tfrac12(F_{xx}+F_{yy})
     =\kappa(xJF_x-yJF_y),\\
 |H|&=\kappa\sqrt{x^2+y^2}.
\end{aligned}
\end{equation}
Thus $H\ne0$ away from the origin. Moreover,
\eqref{eq:model-mean-curvature} gives
$T=JH=-\kappa x\partial_x+\kappa y\partial_y$ in the induced
coordinates. Thus $\diver(JH)=-\kappa+\kappa=0$, and
$F_\kappa$ is Hamiltonian stationary.

Finally, \eqref{eq:model-mean-curvature} and the Euclidean metric give
\begin{equation}\label{eq:model-stationarity-gradient}
 \grad|H|=\frac{\kappa}{\sqrt{x^2+y^2}}
                 (x\partial_x+y\partial_y),\qquad
 |\grad|H||=\kappa>0.
\end{equation}
This proves the converse.
\end{proof}

\section{Comparison with the Chen--Garay--Zhou family}
\label{sec:chen-comparison}

We compare the model $F_\kappa$ with the second family $L_2$
in \cite[Theorem~9.1]{CGZ2009}. The proof of
Theorem~\ref{thm:classification} does not depend on this comparison.

We write $B_0>0$ for the scale parameter denoted by $a$ in
\cite{CGZ2009}, to distinguish it from the function $a=|H|$.
Although the theorem assumes $c\ne0$, setting $c=0$
in \cite[(9.16)]{CGZ2009} gives a map
$L_{2,0}$, which we call the \emph{$c=0$ extension}.
We show below that this map satisfies the
assumptions of Theorem~\ref{thm:classification}.\footnote{In \cite[p.~2658, Theorem~9.1]{CGZ2009},
the first two terms in parentheses in the second component of
$L_2(r,\theta)$ are
$ir^2J_{(1+ic)/2}(r^2)T_c^+(r,\theta)
-r^2J_{(ic-1)/2}(r^2)T_c^-(r,\theta)$
with a factor containing $r^2$ outside the parentheses.
These terms should be
$iJ_{(1+ic)/2}(r^2)T_c^+(r,\theta)
-J_{(ic-1)/2}(r^2)T_c^-(r,\theta)$.
Indeed, the coefficient of $-c_2$ in \cite[p.~2657, (9.16)]{CGZ2009}
contains only one factor of $r^2$ in each term. The same correction
applies to the display immediately preceding Theorem~9.1.
Throughout this section, we use the formula in~\cite[(9.16)]{CGZ2009}.}

\subsection{Specialization at $c=0$}

Let $J_\nu$ denote the Bessel function of the first kind of
order $\nu$. For $c=0$, we use the following formulas for $s>0$:
\begin{equation}\label{eq:bessel-half-orders}
 J_{-1/2}(s)=\sqrt{\frac{2}{\pi s}}\cos s,\qquad
 J_{1/2}(s)=\sqrt{\frac{2}{\pi s}}\sin s.
\end{equation}

In the polar coordinates $(r,\theta)$ of \cite[(9.3)]{CGZ2009},
with $r>0$ and $0<\theta<\pi/2$, set
\begin{equation}\label{eq:chen-integrals}
 \mathcal I(s)=\int_0^s e^{2it^2}\,dt,\qquad
 T_0^\pm(r,\theta)=\int_0^\theta
       e^{\pm it+ir^2\cos(2t)}\,dt.
\end{equation}
Here $s\in\R$, and $T_0^\pm$ are the $c=0$ specializations of
the functions in \cite[(9.9)]{CGZ2009}. The formula in
\cite[(9.16)]{CGZ2009} becomes
\begin{equation}\label{eq:chen-zero-bessel}
 L_{2,0}=c_1\Phi_1-c_2\Phi_2,
\end{equation}
where $c_1,c_2\in\C^2$ are constant vectors and
\begin{equation*}
\begin{aligned}
 \Phi_1
 &=r^2\bigl(iJ_{-1/2}(r^2)T_0^+
                +J_{1/2}(r^2)T_0^-\bigr)\\
 &\quad+\int_0^r t e^{it^2}
           \bigl(J_{-1/2}(t^2)+iJ_{1/2}(t^2)\bigr)\,dt,\\
 \Phi_2
 &=r^2\bigl(iJ_{1/2}(r^2)T_0^+
                -J_{-1/2}(r^2)T_0^-\bigr)\\
 &\quad+\int_0^r t e^{it^2}
           \bigl(J_{1/2}(t^2)-iJ_{-1/2}(t^2)\bigr)\,dt.
\end{aligned}
\end{equation*}
Here and below, we write $T_0^\pm$ for $T_0^\pm(r,\theta)$.
By \eqref{eq:bessel-half-orders}, the radial integrals in
$\Phi_1$ and $\Phi_2$ are
$\sqrt{2/\pi}\,\mathcal I(r)$ and
$-i\sqrt{2/\pi}\,\mathcal I(r)$, respectively.

At $c=0$, the normalization in \cite[(9.17)]{CGZ2009} requires
$c_1$ and $c_2$ to be orthogonal with respect to the Hermitian inner
product, each with squared norm $2\pi B_0^2$.
We therefore choose
\begin{equation*}
 c_1=\sqrt{2\pi}B_0(1,0),\qquad
 c_2=-\sqrt{2\pi}B_0(0,1).
\end{equation*}
Substituting these vectors into \eqref{eq:chen-zero-bessel} and
using \eqref{eq:bessel-half-orders}, we obtain
\begin{equation}\label{eq:chen-zero-specialization}
 L_{2,0}(r,\theta)=2B_0
 \begin{pmatrix}
 r\bigl(i\cos(r^2)T_0^++\sin(r^2)T_0^-\bigr)+\mathcal I(r)\\
 r\bigl(i\sin(r^2)T_0^+-\cos(r^2)T_0^-\bigr)-i\mathcal I(r)
 \end{pmatrix}.
\end{equation}
\subsection{Elimination of $T_0^\pm$}

Differentiating \eqref{eq:chen-integrals} with respect to
$\theta$ gives
\begin{equation}\label{eq:chen-angular-derivatives}
\begin{aligned}
 &\partial_\theta
   \bigl[r\bigl(i\cos(r^2)T_0^++\sin(r^2)T_0^-\bigr)\bigr]\\
 &\qquad=-r\sin\theta\,e^{2ir^2\cos^2\theta}
            +ir\cos\theta\,e^{-2ir^2\sin^2\theta},\\
 &\partial_\theta
   \bigl[r\bigl(i\sin(r^2)T_0^+-\cos(r^2)T_0^-\bigr)\bigr]\\
 &\qquad=ir\sin\theta\,e^{2ir^2\cos^2\theta}
            -r\cos\theta\,e^{-2ir^2\sin^2\theta}.
\end{aligned}
\end{equation}
Integrating \eqref{eq:chen-angular-derivatives} from $0$ to
$\theta$, we obtain
\begin{equation*}
\begin{aligned}
 r\bigl(i\cos(r^2)T_0^++\sin(r^2)T_0^-\bigr)
 &=\mathcal I(r\cos\theta)
       +i\overline{\mathcal I(r\sin\theta)}-\mathcal I(r),\\
 r\bigl(i\sin(r^2)T_0^+-\cos(r^2)T_0^-\bigr)
 &=-i\mathcal I(r\cos\theta)
       -\overline{\mathcal I(r\sin\theta)}+i\mathcal I(r).
\end{aligned}
\end{equation*}
Here the overline denotes complex conjugation.
Substituting these expressions into \eqref{eq:chen-zero-specialization},
we arrive at
\begin{equation}\label{eq:chen-zero-map}
 L_{2,0}(r,\theta)=2B_0
 \begin{pmatrix}
 \mathcal I(r\cos\theta)+i\overline{\mathcal I(r\sin\theta)}\\
 -i\mathcal I(r\cos\theta)-\overline{\mathcal I(r\sin\theta)}
 \end{pmatrix}.
\end{equation}
\subsection{Coordinate change and holomorphic congruence}

At $c=0$, the metric in \cite[(9.4)]{CGZ2009} is
$g=8B_0^2(dr^2+r^2d\theta^2)$. Set
\begin{equation}\label{eq:chen-model-coordinates}
 x=2\sqrt2 B_0r\cos\theta,\qquad
 y=2\sqrt2 B_0r\sin\theta,\qquad
 \kappa=\frac1{4B_0^2}.
\end{equation}
Then $g=dx^2+dy^2$, and the substitution $t=2\sqrt2 B_0s$
in \eqref{eq:immersion} gives
\begin{equation*}
 F_\kappa(x,y)=2\sqrt2 B_0
 \begin{pmatrix}
 \mathcal I(r\cos\theta)\\
 \overline{\mathcal I(r\sin\theta)}
 \end{pmatrix}.
\end{equation*}
Comparing this  with \eqref{eq:chen-zero-map}, we obtain
\begin{equation}\label{eq:chen-unitary-congruence}
 L_{2,0}(r,\theta)=U F_\kappa(x,y),\qquad
 U=\frac1{\sqrt2}\begin{pmatrix}1&i\\-i&-1\end{pmatrix},\qquad
 U^*U=I_2.
\end{equation}
Thus $U$ defines a holomorphic isometry of $\C^2$.

Although the original chart covers only the first quadrant $x>0$, $y>0$,
the right-hand side of \eqref{eq:chen-zero-specialization} defines
a smooth extension $\widehat L_{2,0}$ to $r>0$, $\theta\in\R$.
The equalities used to obtain \eqref{eq:chen-zero-map} hold
for all real $\theta$, since both sides of each equality have the same
derivative with respect to $\theta$ and agree at $\theta=0$.
Thus \eqref{eq:chen-unitary-congruence} holds with $L_{2,0}$ replaced
by $\widehat L_{2,0}$ on this extended domain.
The polar coordinate change \eqref{eq:chen-model-coordinates} covers
$\R^2\setminus\{(0,0)\}$ and has Jacobian determinant $8B_0^2r>0$.
Hence the congruence holds locally near every point of this punctured plane,
including points on the coordinate axes. The origin is excluded because
$H=0$ there by \eqref{eq:model-mean-curvature}.

This proves that the $c=0$ extension is locally congruent to $F_\kappa$,
with $B_0=(4\kappa)^{-1/2}$.

\end{document}